\documentclass[12pt]{amsart}
\usepackage[all]{xy}
\usepackage{amsfonts}
\usepackage{amssymb}
\usepackage{pb-diagram}
\usepackage{enumerate}

\usepackage{color}

\textheight22.5cm
\textwidth16cm
\addtolength{\topmargin}{-20pt}
\evensidemargin-0.5cm
\oddsidemargin-0.5cm

\newtheorem{teo}{Theorem}[section]
\newtheorem{lem}[teo]{Lemma}

\theoremstyle{definition}
\newtheorem{dfn}[teo]{Definition}
\newtheorem{rk}[teo]{Remark}
\newtheorem{ex}[teo]{Example}
\def\<{\langle}
\def\>{\rangle}
\def\ss{\subset}

\def\a{\alpha}
\def\b{\beta}

\def\e{\varepsilon}
\def\l{{\lambda}}

\def\F{{\Phi}}

\def\Mm{{\mathbb M}}
\def\C{{\mathbb C}}

\def\Ker{\mathop{\rm Ker}\nolimits}

\def\supp{\operatorname{supp}}

\def\1{\mathbf 1}

\newcommand{\norm}[1]{\left\| #1 \right\|}
\newcommand{\ov}[1]{\overline{#1}}
\newcommand{\til}[1]{\widetilde{#1}}
\newcommand{\wh}[1]{\widehat{#1}}



\def\N{{\mathbb N}}


\begin{document}

\title[Reflexivity criterion for Hilbert $C^*$-modules]
{A reflexivity criterion for Hilbert $C^*$-modules over
commutative $C^*$-algebras}

\author{Michael Frank}
\thanks{This work is a part of the joint DFG-RFBR project (RFBR grant
07-01-91555 / DFG project ''K-Theory, $C^*$-Algebras, and Index
Theory''.)}
\address{HTWK Leipzig, FB IMN, Postfach 301166, D-04251 Leipzig, Germany}
\email{mfrank@imn.htwk-leipzig.de}
\urladdr{
http://www.imn.htwk-leipzig.de/\~{}mfrank}

\author{Vladimir Manuilov}
\thanks{The second named author was also partially supported by the grant HIII\hspace{-2.3ex}%
\rule{1.9ex}{0.07ex}\,-1562.2008.1}
\address{Dept. of Mech. and Math., Moscow State University,
119991 GSP-1  Moscow, Russia\\
{\it and}  Harbin Institute of Technology, Harbin, P. R. China}
\email{manuilov@mech.math.msu.su} \urladdr{
http://mech.math.msu.su/\~{}manuilov}

\author{Evgenij Troitsky}
\thanks{The third named author was also partially supported by the RFBR grant 07-01-00046.}
\address{Dept. of Mech. and Math., Moscow State University,
119991 GSP-1  Moscow, Russia}
\email{troitsky@mech.math.msu.su}
\urladdr{
http://mech.math.msu.su/\~{}troitsky}

\subjclass[2000]{Primary 46L08, Secondary 54D99}

\keywords{Hilbert $C^*$-module; reflexivity; commutative
$C^*$-algebra}

\begin{abstract}
A $C^*$-algebra $A$ is $C^*$-reflexive if any countably generated
Hilbert $C^*$-module $M$ over $A$ is $C^*$-reflexive, i.e. the
second dual module $M''$ coincides with $M$. We show that a
commutative $C^*$-algebra $A$ is $C^*$-reflexive if and only if
for any sequence $I_k$ of disjoint non-zero $C^*$-subalgebras, the
canonical inclusion $\oplus_k I_k\subset A$ doesn't extend to an
inclusion of $\prod_k I_k$.

\end{abstract}

\maketitle


\section{Introduction}

The aim of the present paper is to study the $C^*$-reflexivity
property for Hilbert $C^*$-modules over $C^*$-algebras. The
motivation comes from three sources. First, this property appears
in our study of dynamical systems and group actions, where it was
shown that some information about orbits can be detected from
$C^*$-reflexivity of the corresponding Hilbert $C^*$-modules
\cite{FMTZAA,FMT2}. Second, $C^*$-reflexive Hilbert $C^*$-modules
are a natural setting for $A$-bilinear functions on them. Third,
there was a series of papers providing various sufficient
\cite{Mishchenko-Trudy_MIAN,TrofimovUMN1987,FMT2} and necessary
\cite{Paschke2} conditions for Hilbert $C^*$-modules over
commutative $C^*$-algebras to be $C^*$-reflexive. The main result
of this paper is a criterion for $C^*$-reflexivity in the
commutative case.

Let us recall some basic facts about the dual and the second dual
of a Hilbert $C^*$-module \cite{Paschke2} (see also
\cite{MaTroBook}). For a Hilbert $C^*$-module $M$ over a
$C^*$-algebra $A$, the {\em dual} Banach module $M'$ is defined
\cite{Paschke2} as the set of all $A$-module bounded linear maps
from $M$ to $A$ (such maps are called {\em functionals}).
Iterating this procedure, one gets the second dual module $M''$.

There are isometric inclusions $M\ss M''\ss M'$ for any Hilbert
$C^*$-module $M$. The identifications are defined as follows.
First of all we have the map $M\to M'$, $m \mapsto \wh m$, $\wh
m(s)=\<s,m\>$ for any $s\in M$. Then we can define the map $M\to
M''$, $m\mapsto \dot m$, $\dot m (f)=f(m)$ for any $f\in M'$.
Finally, $M''\to M'$, $F\mapsto \til F$ is defined by $\til
F(m)=F(\wh m)$. The $A$-valued inner product of $M$ can be
extended to $M''$ by the formula $\<F,G\>=G(\til F)$ and thus
$M''$ becomes a Hilbert $C^*$-module \cite{Paschke2}.

A Hilbert $C^*$-module $M$ is {\em self-dual} if $M'=M$. There are
very few $C^*$-algebras, for which all Hilbert $C^*$-modules are
self-dual, only finitedimensional $C^*$-algebras have this
property \cite{FrankZAA1990}. {\em $C^*$-reflexivity} (i.e.
$M''=M$) is a more common property. For example, all countably
generated Hilbert $C^*$-modules over the $C^*$-algebra of compact
operators with adjoined unit are $C^*$-reflexive \cite{TrofimovK}.

Due to the Kasparov's stabilization theorem \cite{KaspJO}, any
countably generated Hilbert $C^*$-module over a $C^*$-algebra $A$
is $C^*$-reflexive if the standard Hilbert $A$-module $H_A=l_2(A)$
is $C^*$-reflexive. We call a $C^*$-algebra $A$ {\em
$C^*$-reflexive} if $H_A$ is $C^*$-reflexive.

It was shown by Paschke \cite{Paschke} that infinitedimensional
von Neumann algebras are not $C^*$-reflexive. On the positive, it
is known that $C(X)$ is $C^*$-reflexive for nice spaces $X$.

\begin{teo}\label{mishchenko-trofimov}
Let $X$ be a compact metric space. Then $C(X)$ is $C^*$-reflexive.
\end{teo}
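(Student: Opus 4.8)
The plan is to reduce Theorem~\ref{mishchenko-trofimov} to the single assertion $H_A''=H_A$ for $A=C(X)$ (by definition the free module $H_A=l_2(A)$ is the only one we need), and to argue through Paschke's concrete models of the dual modules. For $A=C(X)$ one identifies $H_A$ with the norm-continuous maps $X\to l_2$, equivalently the sequences $(f_n)$, $f_n\in C(X)$, with $\sum_n|f_n|^2$ uniformly convergent on $X$; and $H_A'$ with the sequences $(f_n)$, $f_n\in C(X)$, for which $\sigma_f(x):=\sum_n|f_n(x)|^2$ is bounded, with $\|(f_n)\|^2=\sup_x\sigma_f(x)$. Under the isometric inclusions $H_A\subseteq H_A''\subseteq H_A'$ from the excerpt, $F\in H_A''$ is recorded by $\widetilde F=(g_n)\in H_A'$, $g_n=F(e_n)$; since the continuous partial sums $\sum_{n\le N}|g_n|^2$ increase to the lower semicontinuous function $\sigma_g$, Dini's theorem on the compact space $X$ shows $(g_n)\in H_A$ iff $\sigma_g$ is continuous. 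So the whole theorem reduces to: for every $F\in H_A''$, $\sigma_g$ is continuous.

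Second, suppose not: there are $y_k\to y_\infty$ with $\sigma_g(y_k)\to L>\sigma_g(y_\infty)$. Fixing $N_0$ with $\sum_{n>N_0}|g_n(y_\infty)|^2$ arbitrarily small, the ``new mass'' satisfies $\sum_{n>N_0}|g_n(y_k)|^2\ge\varepsilon_0$ for all large $k$ (a fixed $\varepsilon_0>0$), whereas $\sum_{n\le N}|g_n(y_k)|^2\to\sum_{n\le N}|g_n(y_\infty)|^2$ for each fixed $N$. Since each $\sum_n|g_n(y)|^2$ is finite, a gliding-hump selection yields pairwise disjoint index-windows $W_k\subseteq(N_0,\infty)$ with $\sum_{n\in W_k}|g_n(y_k)|^2\ge\varepsilon_0$; after passing to a subsequence the $y_k$ may be taken pairwise distinct and $\ne y_\infty$ (an infinite repetition of a value would make $\sigma_g$ infinite there). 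Metrizability enters exactly here: it supplies a neighbourhood base at $y_\infty$, pairwise disjoint open neighbourhoods of the $y_k$, and the Urysohn bumps used to localise.

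Third --- the heart of the matter --- one exploits that $F$ is $C(X)$-linear, hence \emph{local}: $F(\psi h)=\psi F(h)$, so $F(h)$ vanishes on any open set where $h$ does, and after multiplying $h$ by a bump supported where only finitely many windows live, $h$ becomes finitely supported and $F$ acts by $F(h)=\sum_n h_n g_n$. Combining finitely many windows $W_k$ with coefficient $\overline{g_n(y_k)}/(\sum_{m\in W_k}|g_m(y_k)|^2)^{1/2}$ on $n\in W_k$ and with Urysohn bumps clustered near $y_\infty$, one produces $h\in H_A'$ whose norm is governed by the overlap multiplicity of the bumps, while $F(h)(y_k)$ contains the diagonal contribution $(\sum_{n\in W_k}|g_n(y_k)|^2)^{1/2}\ge\sqrt{\varepsilon_0}$. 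The expected outcome is that driving the number of overlapping windows to infinity --- while keeping cross-window contributions under control through the choice of $N_0$ and of the windows --- forces $\|F(h)\|_\infty/\|h\|_{H_A'}$ to be unbounded, contradicting the boundedness of $F$. The real obstacle is precisely this quantitative balance: with disjoint bump supports one only gets the harmless estimate $\|F\|\ge\sqrt{\varepsilon_0}$, while overlapping bumps near $y_\infty$ inflate $\sigma_h$, and it is the metric structure of $X$ that should let one arrange supports and windows so that the defect is genuinely amplified. (Equivalently one may work with the extended inner product $\langle F,F\rangle=F(\widetilde F)\in C(X)_+$: a Cauchy--Schwarz/Bessel argument with constant-coefficient test vectors always gives $\langle F,F\rangle\ge\sigma_g$, and the task becomes to show that over a metrizable $X$ there is no ``invisible tail mass'', i.e.\ $\langle F,F\rangle=\sigma_g$.)

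Finally, it is worth noting that the statement also follows from this paper's main criterion: for $X$ compact metric, $C(X)$ is separable, hence so is every $C^*$-subalgebra of it, whereas for any sequence of disjoint non-zero $C^*$-subalgebras $I_k$ the product $\prod_k I_k$ is non-separable (it contains an isometric copy of $l^\infty$); thus $\prod_k I_k$ does not embed into $C(X)$ at all, so \emph{a fortiori} the inclusion $\bigoplus_k I_k\subseteq C(X)$ does not extend to one of $\prod_k I_k$, and the criterion yields $C^*$-reflexivity.
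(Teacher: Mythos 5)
Your first two steps are sound and close in spirit to the paper's: identifying membership in $H_A$ with continuity of $\sigma_g=\sum_n|g_n|^2$ via Dini is equivalent to the Cauchy-type criterion ($K_F=0$) used in Section \ref{sect:strongtrofimov}, and the gliding-hump extraction of disjoint windows $W_k$ carrying mass at least $\varepsilon_0$ at points $y_k\to y_\infty$ parallels the construction of the sets $U_k$ there. But the third step, which you yourself flag as unresolved, is a genuine gap, and the reason you cannot close it is that you are aiming at the wrong target. Trying to contradict the \emph{boundedness} of $F$ forces you into the amplification game with overlapping bumps, and that game cannot be won: $\|F\|<\infty$ is consistent with everything you have built. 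What must be contradicted is the \emph{continuity of the output} $F(h)\in C(X)$ at the accumulation point $y_\infty$. Concretely: choose the $y_k$ inside the dense continuity set $E$ of $\sigma_g$ (possible because $\{\sigma_g>L-\delta\}$ is open by lower semicontinuity and accumulates at $y_\infty$), take \emph{disjoint} Urysohn bumps $\psi_k$ with $\psi_k(y_k)=1$ and $y_\infty\notin\supp\psi_k$, and set $h_n=(-1)^k\psi_k\,g_n(y_k)/s_k$ for $n\in W_k$ (zero otherwise), where $s_k=\bigl(\sum_{m\in W_k}|g_m(y_k)|^2\bigr)^{1/2}\ge\sqrt{\varepsilon_0}$. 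Then $\sigma_h=\sum_k\psi_k^2\le 1$, so $h\in H_A'$, and the pointwise evaluation formula of Lemma \ref{lem:extracttrofimov} (valid at points of $E$) gives $F(h)(y_k)=(-1)^k s_k$, which oscillates with amplitude at least $\sqrt{\varepsilon_0}$ along $y_k\to y_\infty$, contradicting $F(h)\in C(X)$. No quantitative balance is needed; disjoint supports suffice. This is exactly the mechanism of the paper's Theorem \ref{teo:strongtrof}, packaged more generally there: every bounded sequence $(\lambda_k)$ is realized as $(F(f)(x_k))$ for some $f\in H_A'$, so $\overline{\{x_k\}}\cong\beta\N$, which cannot sit inside a metrizable compactum.

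Two smaller remarks. First, your tacit use of $F(h)(y)=\sum_n\overline{g_n(y)}h_n(y)$ is not automatic at every $y$; the paper establishes it only at continuity points of $\sigma_g$ (Lemma \ref{lem:extracttrofimov}), so placing the $y_k$ in $E$ is not cosmetic. Second, your closing paragraph (separability of $C(X)$ versus non-separability of $\prod_kI_k$, which contains an isometric copy of $l^\infty$, fed into Theorem \ref{teo:criteriumwithset}) is a correct and non-circular derivation, since the proof of the criterion does not use Theorem \ref{mishchenko-trofimov}; but it rests on the paper's main theorem rather than constituting the independent proof you set out to give.
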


The first version of a proof was given by Mishchenko
\cite{Mishchenko-Trudy_MIAN}. Then Trofimov \cite{TrofimovUMN1987}
realized that the formulation in \cite{Mishchenko-Trudy_MIAN} was
too general and provided a proof for any compact $X$ with a
certain property L, which, in fact, is the same as the property of
being a Baire space. Although the main part of the proof in
\cite{TrofimovUMN1987} is correct, it was overlooked that
implicitly $X$ was assumed to be a {\it metric} space. Trofimov's
proof was corrected in \cite{FMT2}.

Many examples of $C^*$-reflexive modules arising from group
actions were obtained in our previous papers \cite{FMTZAA,FMT2}.

The main result of this paper is the criterion for
$C^*$-reflexivity for commutative $C^*$-algebras, which is given
in either topological or algebraic terms.

\section{Topology preliminaries: the Baire property and
the Stone-\v Cech compactification}\label{sec:topolBaireStone}
\begin{dfn}\label{dfn:Bairespace}(\cite[p.~155]{munkres2000})
A space $X$ is said to be a \emph{Baire space} if the following
condition holds: Given any countable collection $\{A_n\}$ of
closed subsets of $X$ each of which has empty interior in $X$,
their union $\cup A_n$ also has empty interior in $X$.
\end{dfn}

\begin{teo}[Baire category theorem]\label{teo:Bairecateg}
{\rm\cite[Theorem~48.2]{munkres2000}} If $X$ is a compact
Hausdorff space or a complete metric space, then $X$ is a Baire
space.
\end{teo}

\begin{teo}\label{teo:Bairefunc}
{\rm\cite[Theorem~48.5]{munkres2000}} Let $X$ be a space; let $(Y,
d)$ be a metric space. Let $f_n : X \to Y$ be a sequence of
continuous functions such that $f_n(x) \to f(x)$ for all $x \in
X$, where $f : X \to Y$. If $X$ is a Baire space, the set of
points at which $f$ is continuous is dense in $X$.
\end{teo}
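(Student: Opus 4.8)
The plan is to prove Theorem~\ref{teo:Bairefunc} by controlling the oscillation of the limit map $f$. First I would introduce, for $x\in X$, the oscillation
\[
\omega_f(x)=\inf\bigl\{\operatorname{diam}f(U)\ :\ U\ \text{open in}\ X,\ x\in U\bigr\},
\]
the diameter being taken with respect to $d$. Then $f$ is continuous at $x$ precisely when $\omega_f(x)=0$, and for each $t>0$ the set $U_t:=\{x\in X:\omega_f(x)<t\}$ is open (any open $U\ni x$ with $\operatorname{diam}f(U)<t$ is contained in $U_t$). Hence the set of continuity points of $f$ equals $\bigcap_{k\ge 1}U_{1/k}$, and since $X$ is a Baire space it suffices to prove that each $U_t$ is dense in $X$: then each closed set $X\setminus U_{1/k}$ has empty interior, so by Definition~\ref{dfn:Bairespace} their union has empty interior too, i.e. $\bigcap_k U_{1/k}$ is dense.

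To prove density of $U_t$, fix $t>0$ and set $r=t/4$. For $N\in\N$ put
\[
A_N=\bigl\{x\in X\ :\ d\bigl(f_n(x),f_m(x)\bigr)\le r\ \text{for all}\ n,m\ge N\bigr\}.
\]
Each function $x\mapsto d(f_n(x),f_m(x))$ is continuous, so each $A_N$ is closed; and since $\bigl(f_n(x)\bigr)_n$ converges, hence is Cauchy, for every $x$, we get $X=\bigcup_N A_N$. Now let $W\ss X$ be nonempty and open. I would pass to the subspace $W$, which is again a Baire space, being an open subspace of one, and apply Definition~\ref{dfn:Bairespace} to the cover of $W$ by the relatively closed sets $W\cap A_N$: some $W\cap A_N$ has nonempty interior in $W$, which (as $W$ is open in $X$) is a nonempty open set $G\ss W$ with $G\ss A_N$. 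Letting $m\to\infty$ in the defining inequality (the metric being continuous) yields $d(f_N(x),f(x))\le r$ for every $x\in G$. Fix $x_0\in G$; by continuity of $f_N$ there is a nonempty open $G'$ with $x_0\in G'\ss G$ and $\operatorname{diam}f_N(G')\le r$. Then for $x,y\in G'$,
\[
d\bigl(f(x),f(y)\bigr)\le d\bigl(f(x),f_N(x)\bigr)+d\bigl(f_N(x),f_N(y)\bigr)+d\bigl(f_N(y),f(y)\bigr)\le 3r<t,
\]
so $\operatorname{diam}f(G')<t$ and hence $x_0\in U_t\cap W$. As $W$ was an arbitrary nonempty open set, $U_t$ is dense, and the theorem follows.

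The step I expect to be the main obstacle --- or at least the one demanding the most care --- is the density argument in the second paragraph: one must check that the sets $U_t$ are open, must know (or prove) that an open subspace of a Baire space is again Baire, and must be scrupulous about whether interiors and closures are computed in $W$ or in $X$ when extracting $G$. The remaining ingredients --- upgrading the uniform Cauchy bound valid on $G$ to an honest bound on $\operatorname{diam}f(G')$ by using continuity of the single map $f_N$ on $G'$, and the reduction in the first paragraph --- are routine consequences of the definitions and of the Baire property.
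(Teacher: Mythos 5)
Your proof is correct: the oscillation sets $U_t$ are indeed open, an open subspace of a Baire space is again Baire (Munkres, Lemma 48.4), and the $r=t/4$ bookkeeping closes the estimate $\operatorname{diam}f(G')\le 3r<t$. The paper offers no proof of its own---it simply cites \cite[Theorem~48.5]{munkres2000}---and your argument is essentially the standard one given there, so there is nothing to reconcile.
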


Proofs of the statements of the following theorem
can be found e.g. in \cite[Sect. 3.6]{Engelking1ed}.

\begin{teo}\label{teo:stonecechprop}
Suppose, $X$ is a compact Hausdorff space with
a dense subset $Y$.
Then the following properties are equivalent:
\begin{enumerate}[\rm 1)]
    \item $X$ is the Stone-\v Cech compactification $\b Y$;
    \item any bounded continuous
    function on $Y$ can be extended to a continuous function on
    $X$.
 \end{enumerate}
\end{teo}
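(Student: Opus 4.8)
The plan is to prove the two implications separately, the direction $2)\Rightarrow 1)$ carrying essentially all the content. For $1)\Rightarrow 2)$ I would argue as follows: if $X=\b Y$, then a bounded continuous function $f\colon Y\to\R$ has range contained in a compact interval $K$; viewing $f$ as a continuous map into the compact Hausdorff space $K$ and invoking the universal extension property of the Stone-\v Cech compactification (one of the standard facts referenced above), one obtains a continuous extension $\til f\colon X\to K\subset\R$. The complex-valued case is identical, with a closed polydisc in place of $K$.

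For $2)\Rightarrow 1)$, assume every bounded continuous function on $Y$ extends continuously to $X$. The crucial step is to promote this to the full universal property of $X$: every continuous map $g\colon Y\to L$ into a compact Hausdorff space $L$ extends to a continuous map $X\to L$. To see this, embed $L$ into a cube $[0,1]^J$ (legitimate, as a compact Hausdorff space is Tychonoff), so that $g$ amounts to a family $(g_j)_{j\in J}$ of bounded continuous functions $g_j\colon Y\to[0,1]$. By hypothesis each $g_j$ extends to $\til g_j\colon X\to[0,1]$, and assembling these coordinate functions yields a continuous map $G\colon X\to[0,1]^J$ with $G|_Y=g$. Since $Y$ is dense in $X$ and $L$, being compact in the Hausdorff space $[0,1]^J$, is closed there, we get $G(X)=G(\overline Y)\subset\overline{G(Y)}=\overline{g(Y)}\subset L$, so $G$ takes values in $L$ and is the required extension.

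Granting this, $X\cong\b Y$ follows by the usual back-and-forth argument. Applying the universal property of $\b Y$ to the inclusion $Y\hookrightarrow X$ produces a continuous $\f\colon\b Y\to X$ restricting to $\Id_Y$; applying the property just established for $X$ to the inclusion $Y\hookrightarrow\b Y$ produces a continuous $\psi\colon X\to\b Y$ restricting to $\Id_Y$. The compositions $\psi\circ\f$ and $\f\circ\psi$ are continuous self-maps fixing the dense subset $Y$ pointwise and taking values in Hausdorff spaces, hence are the respective identity maps; therefore $\f$ is a homeomorphism compatible with the inclusions of $Y$, i.e. $X=\b Y$.

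I expect the main obstacle to be the middle paragraph, and within it the verification that the coordinatewise extension $G$ actually lands in $L$ rather than merely in the ambient cube. This is the only point where one genuinely combines compactness of $L$ (forcing it to be closed in $[0,1]^J$) with the density of $Y$ in $X$, and it is precisely what makes hypothesis $2)$ — stated for \emph{all} bounded continuous functions — strong enough to recover the whole universal property. The remaining ingredients (the easy implication and the concluding back-and-forth) are routine, using only uniqueness of continuous extensions from a dense set into a Hausdorff space.
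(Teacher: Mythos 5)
Your proof is correct, and it is the standard argument: the paper does not prove this theorem itself but only cites \cite[Sect.~3.6]{Engelking1ed}, where essentially the same reasoning appears (upgrading the extension property for bounded scalar functions to the full universal property via an embedding of the target into a cube, then concluding by the usual uniqueness argument for compactifications). All the steps you flag as delicate --- continuity of the assembled map $G$, the containment $G(X)\subset L$ via closedness of the compact set $L$ in the cube and density of $Y$, and the fact that $Y\subset X$ compact Hausdorff guarantees $Y$ is Tychonoff so that $\b Y$ exists --- check out.
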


\section{Hilbert $C^*$-modules preliminaries}\label{sec:Hilbmodprelim}

Recall that the standard Hilbert $C^*$-module $H_A=l_2(A)$ is the
set of all sequences $(a_1,a_2,\ldots)$, $a_1,a_2,\ldots\in A$,
such that the series $\sum_{i=1}^\infty a_i^*a_i$ is norm
convergent in $A$.

For $H_A=l_2(C(X))$ the dual module can be described as follows
(see, e.g. \cite{MaTroBook}, Prop. 2.5.5):
\begin{equation}\label{eq:formofdual}
H'_A=\biggl\{f=(f_1,f_2,\dots),\: f_i\in C(X), \: \sup_N
\Bigl\|\sum_{i=1}^N f^*_i f_i\Bigr\|<\infty \biggr\},\:
\|f\|^2=\sup_{N} \Bigl\|\sum_{i=1}^N f^*_i f_i\Bigr\|.
\end{equation}
(By $\|\cdot\|$ we denote the sup norm on $X$.)

Unfortunately, there is no similar description of the second dual
module $H_A''$ for general $A$, but in the commutative case we
have some results on elements of $H_A''$. The proof of the
following statement is close to an argument of
\cite{TrofimovUMN1987}.

\begin{lem}\label{lem:extracttrofimov}
Suppose $(F_1,F_2,\dots)\in H'_A$ represents an element $F\in
H''_A$. Let $E$ be the continuity set of the point-wise limit
$\F(x):=\sum_i F^*_i(x)F_i(x)$. Then, for any $x_0\in E$ and any
$f\in H'_A$, the limit of $\sum_i F^*_i(x_0)f_i(x_0)$ equals the
value of the continuous function $F(f)$ at this point.
\end{lem}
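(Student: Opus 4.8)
The plan is to unwind the definitions of the various isometric inclusions $M\subset M''\subset M'$ and combine them with the convergence theory on Baire spaces. Fix $F\in H_A''$ represented by $(F_1,F_2,\dots)\in H_A'$, and fix $f=(f_1,f_2,\dots)\in H_A'$. By definition of the inner product on $H_A''$ and the identification of $H_A''$ inside $H_A'$, the function $F(f)\in C(X)=A$ is a genuine continuous function on $X$; concretely, for every $x$ one has $F(f)(x)=\langle \til F, f\rangle(x)$ in the appropriate sense, and the first task is to record explicitly what $F(\widehat e_n)$ is, where $e_n$ is the $n$-th standard generator of $H_A$: since $\widehat e_n(s)=\langle s,e_n\rangle = s_n$, one gets $\til F(e_n)=F(\widehat e_n)$, and tracing through the identifications shows that the $n$-th coordinate $F_n$ of the representing sequence satisfies $F_n = F(\widehat e_n)$ (up to the standard conventions of the paper). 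This gives the bridge between the abstract functional $F$ and its coordinates.

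Next I would approximate $f$ by its finite truncations $f^{(N)}=(f_1,\dots,f_N,0,0,\dots)$. For each finite $N$, by $A$-linearity and additivity of the functional $F$ we have, pointwise at every $x\in X$,
\[
F(f^{(N)})(x)=\sum_{i=1}^N F(\widehat e_i\, f_i)(x)=\sum_{i=1}^N F_i(x)\,f_i(x),
\]
because $F(\widehat e_i\,a)=F(\widehat e_i)\,a$ for $a\in A$ and $\widehat{e_i f_i}=\widehat e_i\, f_i$. Thus the continuous function $F(f^{(N)})$ is literally the partial sum $\sum_{i=1}^N F_i f_i$. Now $f^{(N)}\to f$ in the norm of $H_A'$? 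That is false in general, so instead I would argue that $f^{(N)}\to f$ in a weaker sense sufficient for $F$: one shows $F(f^{(N)})(x_0)\to F(f)(x_0)$ at the fixed point $x_0\in E$. The natural route is Cauchy–Schwarz for Hilbert $C^*$-modules: $|F(f^{(N)})(x_0)-F(f^{(M)})(x_0)| = |F(f^{(N)}-f^{(M)})(x_0)|\le \|F\|\cdot\|f^{(N)}-f^{(M)}\|$, but again $\|f^{(N)}-f^{(M)}\|$ need not be small. The remedy, and this is the technical heart, is to use the pointwise estimate coming from the inner product structure: evaluating at $x_0$, $|F(g)(x_0)|^2 \le \|F\|^2\cdot \langle g,g\rangle(x_0)$ is generally not available pointwise either, so instead one localizes — multiply $g$ by a continuous bump $\varphi$ supported near $x_0$ with $\varphi(x_0)=1$, use that $F(\varphi g)=\varphi\, F(g)$, and that $\|\varphi g^{(N)}-\varphi g^{(M)}\|^2\le \sup_{x\in\supp\varphi}\|\sum_{M<i\le N}|g_i(x)|^2\|$, which is controlled near $x_0$ precisely because $x_0\in E$, the continuity set of $\Phi$.

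Here is where the continuity hypothesis $x_0\in E$ enters decisively, and I expect this to be the main obstacle. The series $\sum_i|F_i(x)|^2$ converges to the function $\Phi(x)$, which is lower semicontinuous automatically; continuity at $x_0$ means the tails $\sum_{i>M}|F_i(x)|^2$ are uniformly small for $x$ in a neighbourhood of $x_0$. This is exactly the ``equicontinuity at $x_0$'' needed to push a finite-rank computation to the limit: it lets one estimate, via Cauchy–Schwarz in each fiber $\C$, the quantity $|\sum_{i>M}F_i(x)f_i(x)|^2\le \big(\sum_{i>M}|F_i(x)|^2\big)\big(\sum_{i>M}|f_i(x)|^2\big)\le \big(\sum_{i>M}|F_i(x)|^2\big)\cdot\|f\|^2$ uniformly for $x$ near $x_0$, so that the partial sums $\sum_{i\le N}F_i(x)f_i(x)$ are uniformly Cauchy on that neighbourhood, hence converge to a function continuous at $x_0$. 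On the other hand the localized truncations $\varphi f^{(N)}$ converge to $\varphi f$ in $\|\cdot\|_{H_A'}$ exactly because of the same tail bound on $\varphi$'s support, so $F(\varphi f^{(N)})(x_0)=\sum_{i\le N}F_i(x_0)f_i(x_0)\to F(\varphi f)(x_0)=\varphi(x_0)F(f)(x_0)=F(f)(x_0)$. Comparing the two limits gives $\lim_N\sum_{i=1}^N F_i(x_0)f_i(x_0)=F(f)(x_0)$, which is the assertion. The only delicate point to state carefully is the identity $F(\varphi g)=\varphi F(g)$ for $\varphi\in A$ and $g\in H_A'$ — this is $A$-linearity of $F$ together with the fact that $\varphi$ is central, so it holds verbatim in the commutative setting.
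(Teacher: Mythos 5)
Your overall strategy---localize at $x_0$ with a bump function, use continuity of $\Phi$ at $x_0$ to make tails uniformly small on a neighbourhood, then invoke boundedness of the functional---is the same as the paper's, and your observation that the partial sums $\sum_{i\le N}F_i^*(x)f_i(x)$ are uniformly Cauchy near $x_0$ (by fiberwise Cauchy--Schwarz against the uniformly small tails of $\sum_i|F_i|^2$) is correct. But the step that is supposed to identify the limit with $F(f)(x_0)$ has a genuine gap: you claim that the localized truncations $\varphi f^{(N)}$ converge to $\varphi f$ in the norm of $H'_A$ ``because of the same tail bound on $\varphi$'s support.'' That tail bound controls $\sum_{i>N}|F_i(x)|^2$ on $\supp\varphi$; it says nothing about $\sum_{i>N}|f_i(x)|^2$. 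The point $x_0$ lies in the continuity set of $\Phi=\sum_i|F_i|^2$, not of $\sum_i|f_i|^2$, and for a general $f\in H'_A$ the quantity $\sup_{x\in\supp\varphi}\sum_{i>N}|f_i(x)|^2$ need not tend to $0$ for any choice of neighbourhood of $x_0$ (e.g.\ on $X=[0,1]$ with $x_0=0$, take $f_i$ a bump of height $1$ near $1/i$). So $\varphi f^{(N)}\not\to\varphi f$ in $H'_A$ in general, and the conclusion $F(\varphi f^{(N)})(x_0)\to F(\varphi f)(x_0)$ is unjustified.

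The repair---and this is exactly what the paper does---is to truncate and localize $F$ rather than $f$. Consider the element $G_N=\lambda F-\sum_{i=1}^N\lambda F_i^*\wh e_i$ of $H''_A$, where $\lambda$ is the bump at $x_0$ supported in the small neighbourhood $U_1$ on which the tails of $F$ are uniformly small. Its representing sequence in $H'_A$ is $\lambda$ times the tail of $(F_1,F_2,\dots)$, so the uniform tail estimate on $U_1$ (which does follow from $x_0\in E$) gives $\|G_N\|<\sqrt3\,\e$, using that the embedding $H''_A\subset H'_A$ is isometric. Then $|G_N(f)(x_0)|\le\|G_N\|\,\|f\|<\sqrt3\,\e\,\|f\|$, and since $\lambda(x_0)=1$ this reads $\bigl|F(f)(x_0)-\sum_{i=1}^N F_i^*(x_0)f_i(x_0)\bigr|<\sqrt3\,\e\,\|f\|$, which is the assertion. (A minor point: with the paper's conventions $F(\wh e_i)=F_i^*$, so the relevant sums are $\sum_iF_i^*f_i$, not $\sum_iF_if_i$.)
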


\begin{rk}
By Theorem \ref{teo:Bairefunc} the set $E$ is dense in $X$.
\end{rk}

\begin{proof}
Take $x_0 \in E$ and $\e>0$.
Choose a neighborhood $U_0 \ni x_0$
such that
$|\F(x)-\F(x_0)|<\e^2$
for any $x\in U_0$. Choose $N$ such that
$ \sum_{i=N+1}^\infty F^*_i(x_0)F_i(x_0)< \e^2$.
Choose a neighborhood $U_1\ss U_0$ of $x_0$ such that
$$
\Bigl|\sum_{i=1}^N F^*_i(x)F_i(x)- \sum_{i=1}^N
F^*_i(x_0)F_i(x_0)\Bigr|< \e^2\qquad \forall\,x\in U_1
$$
(this is possible because of continuity of this finite sum of
continuous functions). Then, for any $x\in U_1$,
$$
\Bigl|\sum_{i=N+1}^\infty F^*_i(x)F_i(x)\Bigr|
=\Bigl|\F(x)-\sum_{i=1}^N F^*_i(x)F_i(x)\Bigr|
$$
$$\le |\F(x)-\F(x_0)|
+\Bigl|\F(x_0)-\sum_{i=1}^N F^*_i(x_0)F_i(x_0)\Bigr|
+\Bigl|\sum_{i=1}^N F^*_i(x)F_i(x)-\sum_{i=1}^N
F^*_i(x_0)F_i(x_0)\Bigr|
$$
$$
<3\,\e^2.
$$
Because of the isometric embedding $H''_A \ss H'_A$ (cf.
\cite{Paschke2}) this means that, for any continuous function
$\l:X\to [0,1]$ with $\supp \l\ss U_1$ and $\l(x_0)=1$, we have
the following estimate of the norm of an element of $H''_A$:
$$
\Bigl\|\l\,F- \sum_{i=1}^N \l F^*_i \wh e_i \Bigr\|<\sqrt{3}\e,
$$
where $\wh e_i$ are the images of the standard basis elements
under the natural isometric inclusion $H_A\ss H''_A$. For any $f\in H'_A$
$$
\sqrt{3}\e\,\|f\|> \biggl|\Bigl.\Bigl(\l\,F- \sum_{i=1}^N \l F^*_i
\wh e_i\Bigr)(f)\Bigr|_{x_0} \biggr| = \biggl|F(f)(x_0)-
\sum_{i=1}^N  F^*_i(x_0) f_i(x_0)\biggr|.
$$
\end{proof}

\begin{lem}\label{lem:characseconddual}
A sequence $(F_1,F_2,\dots)\in H'_A$ defines an element $F$ of
$H''_A$ if and only if for each $f\in H'_A$, there exists a
continuous function $\a_f$ such that the point-wise limit of the
series $\sum_i F^*_i f_i$ coincides with $\a_f$ on the dense set
$E$ of continuity points of $\sum_i F^*_i F_i$. In this case
$F(f)=\a_f$.
\end{lem}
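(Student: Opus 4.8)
The plan is to prove the two directions separately, using Lemma~\ref{lem:extracttrofimov} for the ``only if'' part and a direct verification that the candidate functional is bounded and $A$-linear for the ``if'' part.

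For the ``only if'' direction, suppose $(F_1,F_2,\dots)\in H'_A$ represents $F\in H''_A$. Then for each $f\in H'_A$ the function $F(f)$ is an element of $C(X)$, hence continuous, and Lemma~\ref{lem:extracttrofimov} tells us precisely that on the continuity set $E$ of $\F(x)=\sum_i F^*_i(x)F_i(x)$ the point-wise limit of $\sum_i F^*_i(x)f_i(x)$ exists and equals $F(f)(x)$. So we may simply take $\a_f:=F(f)$; it is continuous and agrees with the point-wise limit of $\sum_i F^*_i f_i$ on $E$. This also gives the final assertion $F(f)=\a_f$ in that case. Here I would note that $E$ is dense in $X$ by the Remark after Lemma~\ref{lem:extracttrofimov} (via Theorem~\ref{teo:Bairefunc} and Theorem~\ref{teo:Bairecateg}, since $X$ is compact Hausdorff).

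For the ``if'' direction, assume that for every $f\in H'_A$ there is $\a_f\in C(X)$ agreeing with the point-wise limit of $\sum_i F^*_i f_i$ on $E$. I would define a map $\Phi\colon H'_A\to C(X)$ by $\Phi(f)=\a_f$ and check: (i) $\a_f$ is \emph{uniquely} determined, because two continuous functions agreeing on the dense set $E$ coincide on $X$; (ii) $\Phi$ is $A$-linear --- additivity and $C(X)$-linearity of $f\mapsto\a_f$ follow from the corresponding properties of the point-wise limit on $E$ together with uniqueness from (i); (iii) $\Phi$ is bounded, with $\|\Phi(f)\|\le\|F\|_{H'_A}\,\|f\|$. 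For (iii), at each $x_0\in E$ one estimates the partial sums $\bigl|\sum_{i=1}^N F^*_i(x_0)f_i(x_0)\bigr|\le\bigl(\sum_{i=1}^N F^*_i(x_0)F_i(x_0)\bigr)^{1/2}\bigl(\sum_{i=1}^N f^*_i(x_0)f_i(x_0)\bigr)^{1/2}\le\|F\|\,\|f\|$ by Cauchy--Schwarz in $\C^N$ and the description \eqref{eq:formofdual} of the norm on $H'_A$; letting $N\to\infty$ gives $|\a_f(x_0)|\le\|F\|\,\|f\|$ on $E$, hence on all of $X$ by density and continuity. Thus $\Phi\in\Hom_A(H'_A,C(X))=(H'_A)'=H''_A$ (using $H''_A=H''_A{}''$-type stabilization, or just that the dual of $H'_A$ is $H''_A$ by definition of the second dual). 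Finally I must identify $\Phi$ with the element represented by $(F_1,F_2,\dots)$: evaluating $\Phi$ on the basis functionals $\wh e_j\in H_A\ss H'_A$ gives $\Phi(\wh e_j)=\a_{\wh e_j}=F_j$ (the point-wise limit of $\sum_i F^*_i\delta_{ij}$ is just $F_j$, which is already continuous), so the sequence associated to $\Phi\in H''_A\ss H'_A$ is exactly $(F_1,F_2,\dots)$, as desired, and $F(f)=\Phi(f)=\a_f$.

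The main obstacle I anticipate is step (iii), the boundedness of $\Phi$: one has to pass from the finite-$N$ Cauchy--Schwarz estimate (valid pointwise on $E$) to a uniform bound for $\a_f$ on all of $X$, being careful that the point-wise limit on $E$ is controlled by $\|F\|\,\|f\|$ uniformly in $x_0\in E$ before invoking density. A secondary subtlety is making sure the identification of $\Phi$ with an element of $H''_A\ss H'_A$ genuinely recovers the sequence $(F_i)$ and that this is consistent with the pairing conventions set up in the introduction; this is routine but should be stated carefully.
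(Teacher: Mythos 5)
Your proposal is correct and follows essentially the same route as the paper: the ``only if'' direction is exactly Lemma~\ref{lem:extracttrofimov}, and the converse defines $F(f)=\a_f$, checks $A$-linearity and boundedness via Cauchy--Schwarz (Cauchy--Buniakovskiy), and identifies $F$ with $(F_1,F_2,\dots)$ by evaluating on the $\wh e_i$, where the pointwise limit is everywhere continuous. The only nitpick is a conjugation slip at the end: with the paper's conventions $\til F(e_i)=\sum_k F^*_k(\wh e_i)_k=F^*_i$, not $F_i$.
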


\begin{rk}
The mentioned point-wise limit always exists because at a point
all our sequences become $l_2(\C)$-sequences.
\end{rk}

\begin{proof}
The ``only if'' statement was proved in
Lemma \ref{lem:extracttrofimov}.

Conversely, let us define a functional $F$ by the formula
$F(f)=\a_f$. Evidently, it is an $A$-functional defined on $H'_A$.
It is bounded by the Cauchy-Buniakovskiy inequality. It remains to
show that, for the natural isometric embedding
$H''_A\hookrightarrow H'_A$, $F\mapsto \til F$, the element $\til
F$ corresponds to $(F_1,F_2,\dots)$, i.e. $\til F(e_i)=F^*_i$.
Indeed, $\til F(e_i)=F(\wh e_i)=\sum_k F^*_k (\wh e_i)_k=F^*_i$,
because in this case the point-wise limit is everywhere
continuous. Here $(\wh e_i)_k$ denotes the $k$-component of $\wh
e_i$.
\end{proof}

\section{A sufficient property for $l_2(C(X))$ to be $C^*$-reflexive}\label{sect:strongtrofimov}
We start with a proof of a stronger version of
\cite{TrofimovUMN1987} (see \cite{FMT2} for a corrected version of
\cite{TrofimovUMN1987}).

\begin{teo}\label{teo:strongtrof}
Suppose a compact Hausdorff space $X$ does not contain a copy of
the Stone-\v Cech compactification $\b\N$ of natural numbers $\N$
as a closed subset. Then $l_2(C(X))$ is $C^*$-reflexive.
\end{teo}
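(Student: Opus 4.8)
The plan is to prove the contrapositive: assuming $l_2(C(X))$ is \emph{not} $C^*$-reflexive, I will produce a closed copy of $\beta\N$ inside $X$. So suppose there is an element $F\in H''_A\setminus H_A$ represented by a sequence $(F_1,F_2,\dots)\in H'_A$. First I would use Lemma~\ref{lem:characseconddual} together with Lemma~\ref{lem:extracttrofimov}: the point-wise limit $\F(x)=\sum_i F_i^*(x)F_i(x)$ is a bounded function whose continuity set $E$ is dense (by the Remark after Lemma~\ref{lem:extracttrofimov}, via Theorem~\ref{teo:Bairefunc}, since a compact Hausdorff space is Baire by Theorem~\ref{teo:Bairecateg}). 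The failure of reflexivity must manifest as a point of discontinuity: if $\F$ were continuous everywhere then, I claim, $F$ would already lie in $H_A$ (the tails $\sum_{i>N}F_i^*(x)F_i(x)$ would go to $0$ uniformly by a Dini-type argument on the compact $X$, forcing norm-convergence of $(F_1,F_2,\dots)$ in $H_A$). Hence there is a point $x_0\notin E$, i.e. $\F$ oscillates near $x_0$.

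Next I would extract from this oscillation a sequence of pairwise disjoint open sets clustering at $x_0$, on which ``bumps'' of the partial sums $\sum_{i=1}^{N}F_i^*F_i$ are concentrated. Concretely: since $\F$ is discontinuous at $x_0$, there is $\delta>0$ and a net (hence, using the finite-sum continuity as in Lemma~\ref{lem:extracttrofimov}, a sequence of points $x_k\to x_0$ with $x_k\ne x_0$) along which the partial-sum increments $\sum_{i=N_{k-1}+1}^{N_k}F_i^*(x_k)F_i(x_k)$ stay above $\delta$; by continuity of each finite sum I can choose pairwise disjoint open neighborhoods $V_k\ni x_k$, and bump functions $\lambda_k$ supported in $V_k$ with $\lambda_k(x_k)=1$, such that the ``chunks'' $g^{(k)}:=(\lambda_k F_1^*,\dots,\lambda_k F_{N_k}^*,0,\dots)$ of $H'_A$ are uniformly bounded below in norm and live over disjoint supports. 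The closure $\ov{\{x_k\}}$ in $X$ is then a compact space containing the convergent-to-$x_0$ sequence, but the disjoint-support structure and the estimate from Lemma~\ref{lem:extracttrofimov} (applied at each $x_k\in E$, which I may arrange by perturbing $x_k$ within $E$, as $E$ is dense) show that these bumps behave like an ``independent'' family.

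The heart of the argument is then to show $\ov{\{x_k\}}$ contains a copy of $\beta\N$, for which I would invoke Theorem~\ref{teo:stonecechprop}: it suffices to show that every bounded function on the discrete set $\{x_k:k\in\N\}$ extends continuously to its closure in $X$. Given an arbitrary bounded sequence $(t_k)\subset[0,1]$, I form the functional $f=\sum_k t_k\, g^{(k)}\in H'_A$ (the disjointness of supports makes this a legitimate element of $H'_A$ with controlled norm), and then $F(f)$ is a continuous function on $X$ whose value at each $x_k$ is, by Lemma~\ref{lem:extracttrofimov}, essentially $t_k\cdot\bigl(\text{the bump size at }x_k\bigr)$; dividing out by the bump sizes (which I arrange to be bounded away from $0$ and, after normalizing the $g^{(k)}$, equal to a fixed constant) yields a continuous extension of $(t_k)$ to $\ov{\{x_k\}}$. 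Hence that closure is $\beta\N$, and it is closed in $X$ — contradicting the hypothesis.

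The main obstacle I anticipate is the bookkeeping in the second step: making the choices of $N_k$, the perturbed points $x_k\in E$, the neighborhoods $V_k$, and the bumps $\lambda_k$ all \emph{simultaneously} consistent — in particular ensuring the supports $V_k$ are genuinely pairwise disjoint while each $x_k$ still lies in the continuity set $E$ and the increments stay uniformly large. A secondary delicate point is verifying that $f=\sum_k t_k g^{(k)}$ really defines an element of $H'_A$ (the $\sup_N$-boundedness of $\sum_i f_i^*f_i$) and that the pairing $F(f)(x_k)$ genuinely sees only the $k$-th bump; this is where the disjointness of supports and the tail estimate $\sum_{i>N_k}F_i^*(x_k)F_i(x_k)<\e^2$ from Lemma~\ref{lem:extracttrofimov} must be combined carefully.
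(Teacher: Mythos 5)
Your plan is correct and reaches the same destination by essentially the same architecture as the paper: extract pairwise disjoint open sets each carrying a block of the series $\sum_i F_i^*F_i$ of mass bounded below by a fixed $\delta>0$, pick points $x_k$ in them lying in the continuity set $E$, pair $F$ against bump-weighted test elements of $H'_A$ to realize an arbitrary bounded sequence (up to division by the block masses, which lie in $[\delta,\|F\|^2]$) as the values $F(f)(x_k)$ of a continuous function, and conclude via Theorem~\ref{teo:stonecechprop} that $\ov{\{x_k\}}\cong\b\N$ sits closed in $X$. The one genuinely different ingredient is how you produce the disjoint sets: you first argue by Dini's theorem that $F\notin H_A$ forces the pointwise sum $\F$ to be discontinuous somewhere (correct, since $\F$ is an increasing pointwise limit of continuous functions on a compact space, hence lower semicontinuous), and you then harvest the blocks from the oscillation at a single discontinuity point $x_0$, securing disjointness by choosing each new witness point inside the complement of the closures of the previously chosen neighborhoods, which is an open neighborhood of $x_0$. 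The paper instead works globally with the constant $K_F=\inf_k\sup_{m>k}\bigl\|\sum_{i=k}^m|F_i|^2\bigr\|>0$ and shows the closures of its sets $U_k$ are pairwise disjoint by a short norm computation; that route needs no semicontinuity and no distinguished point, while yours gives a more transparent picture of where non-reflexivity ``lives''. Both are valid.

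Two small repairs. First, in $g^{(k)}$ you should take coordinates $\l_k F_i$ rather than $\l_k F_i^*$: the pairing is $\sum_i F_i^* f_i$, so with your choice the value at $x_k$ is $t_k\sum_i \bigl(F_i^*(x_k)\bigr)^2$, which need not be bounded away from $0$ (or even nonzero), and the normalization step would break; with $\l_k F_i$ you get $t_k\sum_i|F_i(x_k)|^2\geq t_k\delta$ as intended. Second, you do not need (and in a general compact Hausdorff space may not get) a sequence $x_k\to x_0$; all the argument uses is that the $x_k$ are pairwise separated by disjoint open sets and lie in $E$, which your construction already provides.
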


\begin{proof}
Denote for brevity $A:=C(X)$, $H_A:=l_2(C(X))$.
Since $H''_A\ss H'_A$, its elements are represented by
series as in (\ref{eq:formofdual}). Such an element is in $H_A$
if and only if this series is norm-convergent.

Let $F=(F_1,F_2,\dots)\in H''_A$ and set
$$
K_F=\inf_k\sup_{m>k}\sup_{x\in X}\sum_{i=k}^m|F_i(x)|^2
=\inf_k\sup_{m>k}\Bigl\|\sum_{i=k}^m|F_i|^2\Bigr\|.
$$
Obviously, $K_F\leq\|F\|^2$, where $\|F\|^2$ is the least number
$C$ such that $\sup_{x\in X}\sum_{i=1}^\infty|F_i(x)|^2\leq C$. By
the Cauchy criterion, $K_F=0$ if and only if $(F_1,F_2,\dots)\in
H_A$.

We will argue as follows: we will suppose that $H''_A\ne H_A$ and
will prove that $\b\N\ss X$. So we have an element
$(F_1,F_2,\dots)\in H''_A$ such that $K_F>0$.

There exists a number $m(1)$ such that the estimate
$$
\sum_{i=1}^{m(1)-1}|F_i(x)|^2>\|F\|^2-K_F/3
$$
holds for at least one $x\in X$.

Set
$$
U_1=\Bigl\lbrace x\in
X:\sum_{i=1}^{m(1)-1}|F_i(x)|^2>\|F\|^2-K_F/3 \Bigr\rbrace\subset
X.
$$

Set $F^{(1)}=F$,
$F^{(2)}=(0,\ldots,0,F_{m(1)},F_{m(1)+1},\ldots)$, where the first
$m(1)-1$ terms are zeroes. Then $F^{(2)}\in H''_A\setminus H_A$
and $K_{F^{(2)}}=K_F\leq\|F^{(2)}\|^2$.

There exists a number $m(2)>m(1)$ such that the estimate
$$
\sum_{i=m(1)}^{m(2)-1}|F_i(x)|^2>\|F^{(2)}\|^2-K_F/3
$$
holds for at least one $x\in X$.

Set
$$
U_2=\Bigl\lbrace x\in
X:\sum_{i=m(1)}^{m(2)-1}|F_i(x)|^2>\|F^{(2)}\|^2-K_F/3
\Bigr\rbrace\subset X.
$$

Proceeding as above, we get an increasing sequence of numbers
$m(k)$ and a sequence of non-empty open sets $U_k\subset X$ such
that
$$
U_k=\Bigl\lbrace x\in
X:\sum_{i=m(k-1)}^{m(k)-1}|F_i(x)|^2>\|F^{(k)}\|^2-K_F/3\Bigr\rbrace.
$$

Suppose that $\overline{U}_j\cap \overline{U}_l\neq\emptyset$ for
some $j,l$, $j<l$. Take $x_0\in \overline{U}_j\cap
\overline{U}_l$. Then
\begin{equation}\label{la1}
\sum_{i=m(j-1)}^{m(j)-1}|F_i(x_0)|^2\geq \|F^{(j)}\|^2-K_F/3;
\end{equation}
\begin{equation}\label{la2}
\sum_{i=m(l-1)}^{m(l)-1}|F_i(x_0)|^2\geq \|F^{(l)}\|^2-K_F/3\geq
K_F-K_F/3=2K_F/3.
\end{equation}

Summing up (\ref{la1}) and (\ref{la2}), we get
$$
\|F^{(j)}\|^2\geq \sum_{i=m(j-1)}^{m(l)-1}|F_i(x_0)|^2\geq
\|F^{(j)}\|^2-K_F/3+2K_F/3=\|F^{(j)}\|^2+K_F/3.
$$

The obtained contradiction proves that the open sets $U_k$,
$k\in\mathbb N$, (and their closures) do not intersect. Choose a
sequence of points $x_k\in U_k$.

If $E\subset X$ is the (dense) set of continuity points of $\sum_i
F^*_i F_i$ then one can assume also that $x_k\in E$ for each $k$.

Let $\N:=\{x_1,x_2,\dots\}$. We wish to show that the closure
$\ov\N$ of $\N$ in $X$ is homeomorphic to $\b\N$. This is equivalent
(see Theorem \ref{teo:stonecechprop})
to the following property: any bounded function on $\N$ can
be extended to a continuous function on $\ov\N$.

Our functional $F$ should be able to be evaluated on elements of $H'_A$.
In particular, take any bounded sequence $\{\l_k\}$, $\l_k\in \C$.
Choose functions $g_k:X \to [0,1]$, $\supp g_k\ss U_k$, $g_k(x_k)=1$.
Then the sequence
\begin{eqnarray*}
(f_1,f_2,\dots)&=&(\l_1 \|F_1(x_1)\|g_1,\dots, \l_1 \|F_{m(1)}(x_1)\|g_1,\\
&&\quad\l_2 \|F_{m(1)+1}(x_2)\|g_2,\dots, \l_2 \|F_{m(2)}(x_2)\|g_2,\dots)
\end{eqnarray*}
belongs to $H'_A$. By Lemma \ref{lem:extracttrofimov}, the series
$\sum_i F^*_i f_i$ converges over $\N$ point-wise to a continuous
function $F(f)\in C(X)$. In our case,
$$
F(f)(x_k)=\l_k\cdot \sum_{i=m(k)}^{m(k+1)-1} F^*_i(x_k) F_i(x_k).
$$
Thus, varying the sequence $\{\l_k\}$, we can obtain any bounded
sequence of complex numbers, as the sequence of values
$F(f)(x_k)$. Therefore, any bounded function on $\N$, which is
automatically continuous on $\N$, should be extendable to a
contuinuous function on $\ov\N\subset X$ and on entire $X$.
\end{proof}

\section{A criterion for $C^*$-reflexivity}\label{sec:criter}
A more careful analysis of the argument in the previous theorem
implies that instead of embedding $\N$ (and then $\b\N$), we
should embed something coarsely equivalent to $\N$, but more
compatible with the topology on $X$.

Recall that if $\{A_k\}$ is a sequence of Banach spaces then one
can form their direct product $\prod_k A_k$ (resp. direct sum
$\oplus_k A_k$), which is the Banach space of all bounded sequences
$(a_1,a_2,\ldots)$, $a_k\in A_k$, (resp. of all sequences with
$\lim_{k\to\infty}\|a_k\|=0$) with the norm
$\|(a_1,a_2,\ldots)\|=\sup_k\|a_k\|$. If all $A_k$ are
$C^*$-algebras then both $\oplus_k A_k$ and $\prod_k A_k$ are
$C^*$-algebras.

Let $U\subset X$ be an open subset. Then there is a canonical
inclusion of $C_0(U)=\Ker\bigl(C(X)\to C(X\setminus U)\bigr)$ into
$C(X)$. For a sequence $\{U_k\}$, $U_k\subset X$, $k\in\N$, of
open disjoint sets, there is always a canonical inclusion
$\oplus_k C_0(U_k)\subset C(X)$. Sometimes this canonical
inclusion can be extended to an inclusion of $\prod_kC_0(U_k)$
into $C(X)$. In this case we call such inclusion canonical as
well.

Existence of such inclusion of ideals can be expressed in
topological terms: the canonical inclusion of $\sqcup_kU_k$ in $X$
extends to the canonical inclusion of the Gelfand spectrum $Y$ of
$\prod_kC_0(U_k)$ into $X$.

\begin{ex}
Let $X=[0,1]$, $U_k=(\frac{1}{2^{k+1}},\frac{1}{2^k})$. Then the
inclusion $\oplus_k C_0(U_k)\subset C(X)$ doesn't extend to an
inclusion of $\prod_kC_0(U_k)$. Indeed, if we take $f_k\in
C_0(U_k)$ with $\|f_k\|=1$ then the function on $X$ that coincides
with $f_k$ on each $U_k$ is not continuous on X.

\end{ex}

\begin{ex}
Let $X=\beta\N$, $U_k=\{k\}\in\N$. Then $C_0(U_k)=\mathbb C$, and
there is a canonical inclusion of $\prod_kC_0(U_k)$ into
$l^\infty=C(\beta\N)$ (in fact, they coincide).

\end{ex}

\begin{lem}
Let $C(X)$ be not $C^*$-reflexive. Then there exists a sequence
$\{U_k\}$ of disjoint open subsets of $X$ such that $\prod_k
C_0(U_k)$ is canonically included into $C(X)$.
\end{lem}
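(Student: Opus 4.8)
The plan is to run the argument of Theorem \ref{teo:strongtrof} essentially verbatim, but to keep track of where the witnessing points $x_k$ live and to replace the single-point embedding of $\N$ by an embedding of the open sets $U_k$ themselves. Concretely, suppose $C(X)$ is not $C^*$-reflexive, so $H_A'' \neq H_A$ and there is an element $F=(F_1,F_2,\dots)\in H_A''$ with $K_F>0$. Exactly as in the proof of Theorem \ref{teo:strongtrof}, I would construct the increasing sequence of indices $m(k)$ and the non-empty open sets
$$
U_k=\Bigl\{x\in X:\sum_{i=m(k-1)}^{m(k)-1}|F_i(x)|^2>\|F^{(k)}\|^2-K_F/3\Bigr\},
$$
and the same summing-up computation shows the closures $\ov{U}_k$ are pairwise disjoint. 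These $\{U_k\}$ are the disjoint open sets we want; it remains to show $\prod_k C_0(U_k)$ is canonically included in $C(X)$.

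By Theorem \ref{teo:stonecechprop} applied to $Y=\sqcup_k U_k$ densely embedded into the Gelfand spectrum of $\prod_k C_0(U_k)$, this canonical inclusion into $C(X)$ exists iff every bounded continuous function on $\sqcup_k U_k$ (equivalently, every element $(h_1,h_2,\dots)\in\prod_k C_0(U_k)$) extends to a continuous function on $X$ — more precisely, iff the function on $X$ that equals $h_k$ on each $U_k$ and $0$ off $\bigcup_k U_k$ is continuous. So the key step is: given $(h_k)\in\prod_k C_0(U_k)$ with $\sup_k\|h_k\|\le 1$, produce $f\in H_A'$ such that the continuous function $F(f)$ (which exists by Lemma \ref{lem:extracttrofimov} / Lemma \ref{lem:characseconddual}) realizes the desired glued function, at least after the normalization built into $F$. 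Following the construction in Theorem \ref{teo:strongtrof}, on the block $m(k)\le i< m(k+1)$ I set $f_i = \lambda_i \, h_k$ where the coefficients $\lambda_i$ are chosen so that $\sum_{i=m(k)}^{m(k+1)-1} F_i^*(x)\lambda_i$ is, on $U_k$, a fixed strictly positive continuous function bounded away from $0$ (this uses that on $U_k$ the partial sum $\sum_{i=m(k)}^{m(k+1)-1}|F_i(x)|^2$ is large, so one can, e.g., take $f_i$ proportional to $F_i$ on $U_k$, giving $\sum F_i^* f_i = (\text{const})\cdot\sum|F_i|^2$ there). Because the blocks are finite and the $U_k$ have disjoint closures, such $f=(f_i)$ lies in $H_A'$: the relevant partial sums $\sum|f_i|^2$ are uniformly bounded. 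Then $F(f)$ is continuous on $X$, vanishes outside $\bigcup_k \ov{U}_k$, and on each $U_k$ equals $h_k$ times a continuous nowhere-zero factor; dividing out that factor (which one can absorb into $(h_k)$ from the start, since the factor depends only on $F$, not on $(h_k)$) shows the glued function is continuous, as required.

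The main obstacle I anticipate is the gluing/continuity bookkeeping at the ``boundary'' points — points of $\ov{U}_k\setminus U_k$ and limit points of $\bigcup_k U_k$ not in any single $U_k$. On $U_k$ itself we have $h_k\in C_0(U_k)$, so $h_k\to 0$ approaching $\partial U_k$, which forces the glued function to be continuous across the boundary of each individual $U_k$; but at a point $x_\infty$ that is a limit of points from infinitely many different $U_k$'s one must argue that $F(f)(x_\infty)=0$ and that $F(f)\to 0$ there. Here the quantitative input from Lemma \ref{lem:extracttrofimov} is essential: at a continuity point $x_0$ of $\sum_i F_i^*F_i$ the tail $\sum_{i>N}|F_i(x)|^2$ is uniformly small on a neighborhood, which controls $|F(f)(x)|$ by $(\text{tail})^{1/2}\cdot\|f\|$ on that neighborhood, forcing $F(f)\to 0$ at such points; and the continuity-set $E$ is dense, while $F(f)$ is already known to be globally continuous by the lemma, so density of $E$ plus the estimate on $E$ pins down $F(f)$ everywhere. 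So the real work is choosing the $\lambda_i$ (equivalently, the $f_i$) uniformly across all blocks so that $f\in H_A'$ and so that the per-block multiplier is the \emph{same} function independent of the chosen $(h_k)$; once that normalization is fixed, Theorem \ref{teo:stonecechprop} delivers the canonical inclusion $\prod_k C_0(U_k)\subset C(X)$ and the lemma is proved.
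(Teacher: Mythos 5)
This is essentially the paper's own proof: the same blocks $m(k)$ and open sets $U_k$ with pairwise disjoint closures extracted from the argument of Theorem \ref{teo:strongtrof}, the same test elements $f_i=\lambda_k F_i g_k^{-1}$ on the $k$-th block (your ``$f_i$ proportional to $F_i$ with the invertible block sum $g_k=\sum_{i=m(k)}^{m(k+1)-1}F_i^*F_i$ divided out and absorbed into $h_k$''), and the same conclusion that $\lambda\mapsto F(f^\lambda)$ is a well-defined injective extension of the canonical inclusion $\oplus_k C_0(U_k)\subset C(X)$ because $F(f^\lambda)$ is continuous and agrees with the pointwise limit $\lambda_k$ on $U_k$ (and $0$ off $\overline{\sqcup_k U_k}$) on the dense continuity set. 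The only cosmetic wrinkle is your appeal to Theorem \ref{teo:stonecechprop}, which is inapposite since the Gelfand spectrum of $\prod_k C_0(U_k)$ is not $\beta(\sqcup_k U_k)$; but you immediately restate the correct operational criterion, and on $\overline{\bigcup_k U_k}\setminus\bigcup_k U_k$ the value of $F(f^\lambda)$ is simply whatever continuity forces (not necessarily $0$), which is all the lemma needs.
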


\begin{proof}
Let $(F_1,F_2,\ldots)\in H''_A\setminus H_A$. As in the proof of
Theorem \ref{teo:strongtrof}, one can construct
\begin{itemize}
\item
a number $K>0$ (one can take $K=2K_F/3$);
\item
an increasing sequence $\{m(k)\}_{k\in\N}$ of integers;
\item
a sequence $U_1,U_2,\ldots$ of open subsets of $X$
\end{itemize}
such that
\begin{enumerate}
\item
$\overline{U}_i\cap\overline{U}_j=\emptyset$ if $i\neq j$;
\item
$K<\sum_{i-m(k)}^{m(k+1)-1} F_i^*(x)F_i(x)\leq \norm{F}^2$ for any
$x\in U_k$.

\end{enumerate}



\medskip
Let $\lambda_k\in C_0(U_k)$. Note that $\lambda_k\cdot F\in
C_0(U_k)\subset C(X)$ for any $F\in C(X)$. Note that the function
$g_k(x)=\sum_{i-m(k)}^{m(k+1)-1} F_i^*(x)F_i(x)$ is invertible on
$U_k$. For a sequence $\lambda=(\lambda_1,\lambda_2,\ldots)$, set
$$
f^\lambda=(f_1,f_2,\ldots)=(\lambda_1F_1g_1^{-1},\ldots,\lambda_1
F_{m(1)}g_1^{-1},\lambda_2 F_{m(1)+1}g_2^{-1},\ldots, \lambda_2
F_{m(2)}g_2^{-1},\ldots).
$$
If the sequence $\lambda$ is bounded (i.e. lies in $\prod_k
C_0(U_k)$) then $f^\lambda\in H'_A$.

Let $Y=\overline{\sqcup_kU_k}\setminus \sqcup_k U_k$. Then
$X\setminus Y$ is dense in $X$ and, for any $x\in X\setminus Y$,
the series $\sum_i F_i^*(x)f_i(x)$ converges either to 0, if $x\in
X\setminus \sqcup_k U_k$, or to $\lambda_k(x)$, if $x\in U_k$.

Define a map $\prod_kC_0(U_k)\to C(X)$ by $\lambda\mapsto
F(f^\lambda)$. It is well-defined due to continuity of $F(f)$ for
each $f\in H'_A$. And it is obviously injective and coincides with
the canonical inclusion of each $C_0(U_k)$ into $C(X)$.
\end{proof}

\begin{lem}
Let there exist a sequence $\{I_k\}$ of non-trivial left ideals in
a $C^*$-algebra $A$ such that
\begin{enumerate}[\rm (1)]
\item
$I_k^*I_l=0$ whenever $k\neq l$;
\item
$\prod_k I_k$ canonically embeds into $A$.
\end{enumerate}
Then $A$ is not $C^*$-reflexive.

\end{lem}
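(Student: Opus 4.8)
The plan is to construct, from the hypothesised ideals $\{I_k\}$, an element of $H''_A \setminus H_A$, thereby contradicting $C^*$-reflexivity. The construction mimics the computations of Section \ref{sect:strongtrofimov} in reverse: there the non-reflexivity produced such ideals, and here we run the implication backwards. First I would pick, for each $k$, an element $a_k \in I_k$ with $\|a_k\| = 1$, and consider the sequence $F = (a_1, a_2, a_3, \dots)$. Since $I_k^* I_l = 0$ for $k \neq l$, the ``partial sums'' $\sum_{i=1}^N a_i^* a_i$ have norm $\sup_{k \le N}\|a_k^* a_k\| = 1$ (the cross terms vanish and each block is a single summand), so $\sup_N \|\sum_{i=1}^N a_i^* a_i\| = 1 < \infty$ and hence $F \in H'_A$. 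On the other hand the Cauchy criterion fails badly: $\sup_{m > k}\|\sum_{i=k}^m a_i^* a_i\| = 1$ for every $k$, so $F \notin H_A$. It remains to show $F \in H''_A$, i.e. that $F$ defines a bounded $A$-functional on $H'_A$ via $f \mapsto \til F$ with $\til F(e_i) = a_i^*$.

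The key device is the canonical embedding $\iota\colon \prod_k I_k \hookrightarrow A$ from hypothesis (2). Given $f = (f_1, f_2, \dots) \in H'_A$, I would form, for each $k$, the element $a_k^* f_k \in I_k^* \cdot A \subseteq \overline{I_k^* A}$; more carefully one wants these to lie in the $C^*$-subalgebra whose product is embeddable, so I would work with the sequence $(a_k^* f_k)_k$ and observe it is bounded in $k$ (indeed $\|a_k^* f_k\| \le \|a_k\|\|f_k\| \le \|f\|$), so $(a_1^* f_1, a_2^* f_2, \dots) \in \prod_k \overline{I_k^* A}$. One then needs that $\overline{I_k^* A}$ sits inside a system of disjoint subalgebras whose product canonically embeds into $A$: this follows because $\overline{I_k^* A} \subseteq \overline{I_k^* I_k}$-type considerations together with the orthogonality $I_k^* I_l = 0$ give $\overline{I_k^* A}^* \cdot \overline{I_l^* A} = 0$ for $k \neq l$, and the canonical embedding of $\prod_k I_k$ restricts to one of $\prod_k \overline{I_k^* A}$ (or one passes to $\prod_k I_k^* I_k$ and uses that $a_k^* f_k$ can be written using elements there). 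Applying $\iota$ to $(a_k^* f_k)_k$ yields an element $\alpha_f \in A$, and I would set $F(f) := \alpha_f$. Linearity and $A$-linearity in $f$ are immediate; boundedness follows from $\|\alpha_f\| = \sup_k \|a_k^* f_k\| \le \|f\|$.

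Next I would verify the compatibility condition that makes $F$ genuinely the functional represented by $(a_1, a_2, \dots)$: for the standard basis vectors $\wh e_i \in H_A \subseteq H'_A$ one computes $F(\wh e_i) = \iota\big((0,\dots,0, a_i^* \cdot 1, 0, \dots)\big) = a_i^*$ (here the canonical nature of $\iota$ — that it extends the inclusion $\oplus_k I_k \subset A$, and $a_i^* \in I_i$ up to the subalgebra adjustment — is exactly what pins down the value), which by the characterisation in Lemma \ref{lem:characseconddual} (or directly by the definition $\til F(e_i) = F(\wh e_i) = a_i^*$) shows $\til F$ corresponds to $(a_1, a_2, \dots)$ and hence $F \in H''_A$. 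Since $F \notin H_A$, this exhibits $H''_A \supsetneq H_A$, so $A$ is not $C^*$-reflexive.

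The main obstacle I anticipate is the bookkeeping in the second paragraph: the hypothesis is phrased for the ideals $I_k$ themselves, but the natural objects arising from pairing $F$ against an arbitrary $f \in H'_A$ are the products $a_k^* f_k$, which a priori lie in $A$ rather than in $I_k$, so one must argue that they in fact lie in a disjoint family of subalgebras covered by a canonical product-embedding — most cleanly by replacing $\{I_k\}$ at the outset with the $C^*$-subalgebras $\overline{I_k^* I_k}$ (which are mutually orthogonal by (1) and inherit a canonical embedding of their product from that of $\prod_k I_k$), and choosing $a_k$ so that $a_k^*$ lies in the multiplier-like closure making $a_k^* f_k \in \overline{I_k^* I_k}$. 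Once this is set up, everything else is the routine estimate $\|a_k^* f_k\| \le \|f\|$ plus the universal property of the Stone–Čech-type extension packaged in the word ``canonical.''
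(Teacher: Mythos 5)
Your proposal is correct and follows essentially the same route as the paper: pick $a_k\in I_k$ of norm one, set $F=(a_1,a_2,\dots)$, note that $\sum_k a_k^*a_k$ is not norm-Cauchy so $F\notin H_A$, and define $F(f)$ as the image of the bounded sequence $(a_k^*f_k)_k$ under the canonical embedding of the product. The bookkeeping worry in your last paragraph resolves more simply than you suggest: since $I_k$ is a left ideal, $I_k^*$ is a right ideal, so $a_k^*f_k\in I_k^*A\subseteq I_k^*$, and $\prod_k I_k^*=\bigl(\prod_k I_k\bigr)^*$ embeds canonically whenever $\prod_k I_k$ does (compose with the involution), so no passage to $\overline{I_k^*I_k}$ or multiplier-type closures is needed.
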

\begin{proof}

Take $a_k\in I_k$ such that $\|a_k\|=1$. Let $F=(a_1,a_2.\ldots)$.
As the series $\sum_k a_k^*a_k$ doesn't converge in norm, $F\notin
H_A=l_2(A)$. Let us show that $F\in H''_A$. Take some
$f=(f_1,f_2,\ldots)\in H'_A$. Then we can define $F(f)$ as
$F(f)=\sum_k a_k^*f_k:=(a_1^*f_1,a_2^*f_2,\ldots)$. As $I_k$ is a
left ideal, so $a_k^*f_k\in I_k$. As $f\in H'_A$, so the sequence
$(a_1^*f_,a_2^*f_2,\ldots)$ is bounded, hence lies in $\prod_k
I_k$, hence, by assumption, in $A$. Thus $F(f)\in A$ is
well-defined.

\end{proof}

So, we have proved the following theorem.

\begin{teo}\label{teo:criteriumwithsystem}
The module $l_2(C(X))$ is not $C^*$-reflexive if and only if there
exists a sequence $\{U_k\}$ of open pairwise non-intersecting
non-empty sets in $X$ such that
$$
\prod_k C_0(U_k)\subset C(X).
$$
\end{teo}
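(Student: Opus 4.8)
The plan is to assemble Theorem~\ref{teo:criteriumwithsystem} from the three lemmas already proved in this section, reading the excerpt's remark ``So, we have proved the following theorem'' literally. In one direction, suppose $l_2(C(X))$ is not $C^*$-reflexive, i.e. $C(X)$ is not $C^*$-reflexive. Then the first lemma of the section produces a sequence $\{U_k\}$ of disjoint open subsets of $X$ together with a canonical inclusion $\prod_k C_0(U_k)\subset C(X)$. One should note that the $U_k$ can be taken non-empty: this is exactly what is guaranteed in the construction (each $U_k$ is non-empty because it is defined by a strict inequality that holds at some point, as in the proof of Theorem~\ref{teo:strongtrof}). That settles the ``only if'' part directly.

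For the converse, suppose we are given a sequence $\{U_k\}$ of pairwise disjoint non-empty open subsets of $X$ with $\prod_k C_0(U_k)\subset C(X)$ canonically. Set $A=C(X)$ and $I_k=C_0(U_k)$, viewed as ideals (hence, in particular, left ideals) in $A$. Each $I_k$ is non-trivial since $U_k$ is non-empty open. The disjointness of the supports gives $I_k^* I_l = I_k I_l = 0$ for $k\neq l$, because a function supported in $\overline{U_k}$ times a function supported in $\overline{U_l}$ vanishes when $U_k\cap U_l=\emptyset$ (here one uses that elements of $C_0(U_k)$ vanish outside $U_k$, so the product vanishes identically). Condition (2) of the third lemma is precisely the hypothesis that $\prod_k I_k$ canonically embeds into $A$. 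Hence the third lemma applies and yields that $A=C(X)$ is not $C^*$-reflexive, which is the same as saying $l_2(C(X))$ is not $C^*$-reflexive (by Kasparov stabilization, as recalled in the introduction, the module $H_A$ being $C^*$-reflexive is the definition of $A$ being $C^*$-reflexive).

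The only genuine point needing care — and the step I expect to be the main obstacle — is the bookkeeping translating between the ``canonical inclusion $\prod_k C_0(U_k)\subset C(X)$'' phrasing in the first and last lemmas and the ``$\prod_k I_k$ canonically embeds into $A$'' phrasing in the third lemma: one must check these mean the same thing, namely that the embedding restricts to the obvious inclusion $C_0(U_k)\hookrightarrow C(X)$ on each factor, which is how ``canonical'' was defined earlier in the section (``Sometimes this canonical inclusion can be extended\dots In this case we call such inclusion canonical as well''). Granting that identification, the two implications above are immediate, and no further estimates are required. I would therefore write the proof as two short paragraphs — ``only if'' via the first lemma, ``if'' via the third lemma — with a remark that non-emptiness of the $U_k$ is built into the construction and that $C_0(U_k) C_0(U_l)=0$ for $k\ne l$ by disjointness of supports.

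\begin{proof}
If $l_2(C(X))$ is not $C^*$-reflexive, apply the first lemma of this section: it produces disjoint open subsets $U_k\subset X$ with a canonical inclusion $\prod_k C_0(U_k)\subset C(X)$, and by construction (cf. the proof of Theorem~\ref{teo:strongtrof}) each $U_k$ is non-empty.

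Conversely, suppose $\{U_k\}$ are pairwise disjoint non-empty open subsets of $X$ with $\prod_k C_0(U_k)\subset C(X)$ canonically. Put $A=C(X)$ and $I_k=C_0(U_k)$, regarded as (two-sided, hence left) ideals in $A$; each is non-trivial since $U_k\ne\emptyset$ is open. For $k\ne l$ one has $U_k\cap U_l=\emptyset$, so any element of $I_k$ vanishes outside $\overline{U_k}$ and any element of $I_l$ vanishes outside $\overline{U_l}$; since $\overline{U_k}\cap\overline{U_l}$ may be non-empty only on the boundaries where both factors already vanish, the product is identically zero, i.e. $I_k^*I_l=I_kI_l=0$. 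The hypothesis that the canonical inclusion extends to $\prod_k I_k\subset A$ is exactly condition (2) of the third lemma of this section. That lemma now gives that $A=C(X)$ is not $C^*$-reflexive, i.e. $l_2(C(X))=H_{C(X)}$ is not $C^*$-reflexive.
\end{proof}
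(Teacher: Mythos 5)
Your proof is correct and is essentially identical to the paper's own (two-sentence) proof: the ``only if'' direction is the first lemma of the section, and the ``if'' direction is the second lemma applied with $A=C(X)$ and $I_k=C_0(U_k)$. The only discrepancy is bookkeeping — the section contains two lemmas, not three — and your added verifications (non-emptiness of the $U_k$, $I_k^*I_l=0$ from disjointness) are details the paper leaves implicit.
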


\begin{proof}
This follows from the two preceding lemmas. If $A=C(X)$ then
$C_0(U_k)$ are the (left) ideals required in the second Lemma.
\end{proof}

Now, keeping in mind the Kasparov stabilization theorem
and some evident topological argument, we can reformulate
this theorem in the following way.

\begin{teo}\label{teo:criteriumwithset}
Any countable generated Hilbert $C^*$-module over $C(X)$ is
$C^*$-reflexive if and only if there does not exist any sequence
of orthogonal ideals $I_k\in C(X)$ such that $\prod_k I_k\subset
C(X)$.

\end{teo}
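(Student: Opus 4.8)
The plan is to treat the statement as a repackaging of Theorem~\ref{teo:criteriumwithsystem}. That theorem already characterizes when the standard module $l_2(C(X))$ fails to be $C^*$-reflexive, so the only extra ingredients needed here are: (i) the Kasparov stabilization theorem --- recalled in the Introduction, where it is observed that every countably generated Hilbert $C^*$-module over a $C^*$-algebra $A$ is $C^*$-reflexive once $H_A$ is; and (ii) the elementary dictionary between ideals of $C(X)$ and open subsets of $X$, under which orthogonality of ideals corresponds to disjointness of the open sets and non-triviality of an ideal to non-emptiness of the set. (As in the earlier lemma on orthogonal left ideals, the ideals $I_k$ here are understood to be non-trivial.) I would therefore split the ``if and only if'' into its two implications and run each one through this dictionary.

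For the direction ``no such system of ideals exists $\Rightarrow$ every countably generated module is reflexive'', I would argue through $l_2(C(X))$: given any sequence $\{U_k\}$ of pairwise disjoint non-empty open subsets of $X$, the subalgebras $C_0(U_k)$ are orthogonal non-trivial ideals of $C(X)$, so by hypothesis $\prod_k C_0(U_k)$ does not embed canonically into $C(X)$; by Theorem~\ref{teo:criteriumwithsystem} this says exactly that $l_2(C(X))$ is $C^*$-reflexive, and then the stabilization fact from the Introduction upgrades this to $C^*$-reflexivity of every countably generated Hilbert $C^*$-module over $C(X)$. For the reverse direction I would first note that $l_2(C(X))$ is itself countably generated, hence $C^*$-reflexive by hypothesis, so Theorem~\ref{teo:criteriumwithsystem} forbids any sequence $\{U_k\}$ of pairwise disjoint non-empty open sets with $\prod_k C_0(U_k)\ss C(X)$. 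It then remains to convert an arbitrary sequence of orthogonal non-trivial ideals $\{I_k\}$ with $\prod_k I_k\ss C(X)$ into such a forbidden system: I would take $U_k$ to be the union of the cozero sets of the elements of $I_k$, so that $U_k$ is non-empty and open, $\overline{I_k}=C_0(U_k)$, and the orthogonality relations $I_jI_l=0$ for $j\ne l$ force $U_j\cap U_l=\emptyset$.

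The single point requiring a real (if short) argument --- the step I would flag as the only non-formal one --- is that the product-embedding condition passes from $\{I_k\}$ to $\{\overline{I_k}\}=\{C_0(U_k)\}$. For this I would use that the inclusion $\oplus_k C_0(U_k)\ss C(X)$ is \emph{always} valid (a disjointly supported $c_0$-sequence glues to a continuous function, being the uniform limit of its finite partial sums, each of which is continuous because $C_0(U_k)\ss C(X)$), combined with a $2^{-k}$-approximation: for a bounded sequence $(h_k)$ with $h_k\in C_0(U_k)$, pick $f_k\in I_k$ with $\|h_k-f_k\|\le 2^{-k}$; then $(f_k)\in\prod_k I_k\ss C(X)$ and $(h_k-f_k)\in\oplus_k C_0(U_k)\ss C(X)$, and adding the two resulting continuous functions shows that $(h_k)$ glues to a continuous function, i.e. $\prod_k C_0(U_k)\ss C(X)$ --- contradicting the previous step. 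In the opposite direction no work is needed, since the $C_0(U_k)$ are already closed ideals. Beyond this bookkeeping I do not anticipate any genuine obstacle: all the substance lies in Theorem~\ref{teo:criteriumwithsystem} and in the stabilization implication, both already at our disposal.
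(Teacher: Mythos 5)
Your proposal is correct and follows exactly the route the paper intends: the paper derives this theorem from Theorem~\ref{teo:criteriumwithsystem} with only the remark that one should ``keep in mind the Kasparov stabilization theorem and some evident topological argument.'' Your write-up simply makes that evident argument explicit (the ideal/open-set dictionary, plus the $2^{-k}$-approximation showing the product-embedding condition passes to the closures $C_0(U_k)$), and all of these steps check out.
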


We conjecture that the same condition gives a criterion for
$C^*$-reflexivity for general (non-commutative) $C^*$-algebras.

\section{An example}\label{sec:example}

Let $A$ be the $C^*$-subalgebra of $l^\infty$ that consists of all
sequences $\{a_n\}_{n\in\N}$ such that
$\lim_{n\to\infty}|a_{n+1}-a_n|=0$. This $C^*$-algebra is the
algebra of all continuous functions on the {\em Higson
compactification} $\nu\N$ of $\N$ \cite{Roe}.

\begin{teo}
The $C^*$-algebra $A=C(\nu\N)$ is $C^*$-reflexive.

\end{teo}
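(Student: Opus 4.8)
The plan is to apply the criterion of Theorem \ref{teo:criteriumwithset} (or equivalently Theorem \ref{teo:criteriumwithsystem}): to show $A=C(\nu\N)$ is $C^*$-reflexive it suffices to prove that there is \emph{no} sequence $\{U_k\}$ of pairwise disjoint non-empty open subsets of $\nu\N$ for which the canonical inclusion $\oplus_k C_0(U_k)\subset C(\nu\N)$ extends to an inclusion of $\prod_k C_0(U_k)$. So I would argue by contradiction: suppose such a sequence $\{U_k\}$ exists. The first step is to reduce to open subsets of the discrete part $\N\subset\nu\N$. Since $\N$ is a dense open subset of the Higson compactification, each $U_k$ meets $\N$, so replacing $U_k$ by $U_k\cap\N$ I may assume each $U_k$ is a non-empty subset $S_k\subset\N$ (open since $\N$ is discrete), and the $S_k$ are pairwise disjoint; moreover $C_0(S_k)\cong c_0(S_k)$. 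The hypothesis then says: for every bounded family $\lambda=(\lambda_k)$ with $\lambda_k\in c_0(S_k)$, the function on $\N$ equal to $\lambda_k$ on $S_k$ and $0$ off $\bigcup_k S_k$ extends continuously to $\nu\N$, i.e.\ belongs to $A$.

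The heart of the proof is to exhibit a single bounded choice of $\lambda$ violating the Higson condition $\lim_{n\to\infty}|a_{n+1}-a_n|=0$. For this I would use that each $S_k$ is infinite (if some $S_k$ were finite, $c_0(S_k)$ is finite-dimensional and can be handled or discarded; in fact infinitely many $S_k$ must be infinite, else all but finitely many $C_0(U_k)$ are finite-dimensional and one reduces to that sub-family — this is a point to be careful about, but a finite union of finite sets is harmless and one can also absorb finite $S_k$'s by merging, so WLOG each $S_k$ is infinite). Pick for each $k$ a strictly increasing enumeration of $S_k$ and inside $S_k$ choose a long run, or better, choose in each $S_k$ an element $n_k$ together with $n_k+1\notin S_k$ — more precisely, since the $S_k$ are disjoint and each infinite, one can select points $n_k\in S_k$ so that the consecutive integer $n_k+1$ lies in $\bigcup_j S_j\setminus S_k$ or outside $\bigcup S_j$; by pigeonhole and disjointness one can arrange the sequence $n_k\to\infty$ and the ``jump'' at $n_k$ is controlled entirely by $\lambda$. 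Then set $\lambda_k$ on $S_k$ to be (say) a function that is $1$ at $n_k$ and tends to $0$ along $S_k$, chosen so that the resulting global function $a$ has $a(n_k)=1$ while $a(n_k+1)$ is $0$ (or $\le 1/2$). This makes $|a_{n_k+1}-a_{n_k}|\ge 1/2$ for all $k$, with $n_k\to\infty$, so $a\notin A$, contradicting the assumed extension property.

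I would then conclude by Theorem \ref{teo:criteriumwithset} that every countably generated Hilbert $C^*$-module over $C(\nu\N)$ is $C^*$-reflexive. The main obstacle I anticipate is the combinatorial selection in the previous paragraph: one must choose the points $n_k\in S_k$ (and the functions $\lambda_k$) simultaneously so that (i) $n_k\to\infty$, (ii) the ``neighbour'' $n_k+1$ is not in $S_k$ so that the value of the global function there is forced to be small independently of $\lambda_k$, and (iii) the $\lambda_k$ genuinely lie in $c_0(S_k)$ with $\sup_k\|\lambda_k\|\le 1$. The disjointness of the $S_k$ and the fact that $\N$ is well-ordered make this possible — essentially, since $S_k$ is infinite it cannot contain a whole tail of $\N$, hence contains arbitrarily large $n$ with $n+1\notin S_k$; picking one such $n=n_k$ bigger than $n_{k-1}$ does the job. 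Once the selection is in place the rest is the bookkeeping that $f^\lambda$ (in the notation of the lemmas) would be forced into $A$ yet the corresponding function has non-vanishing consecutive differences along a subsequence, the desired contradiction.
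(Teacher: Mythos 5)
Your proposal is correct and follows essentially the same route as the paper: invoke the criterion of Theorem \ref{teo:criteriumwithsystem}, pass to the trace of the sets $U_k$ on the dense discrete part $\N\subset\nu\N$, and produce a bounded element of the product whose image in $l^\infty$ has non-vanishing consecutive differences along a subsequence going to infinity, contradicting the Higson condition. The one blemish is the parenthetical assertion that ``infinitely many $S_k$ must be infinite'' --- this is false (all $S_k$ could be singletons, which is in fact exactly the reduction the paper makes), but your fallback of merging the $S_k$ into infinite blocks, or more simply selecting a single sparsely distributed point $n_k$ from each $S_k$ and taking $\lambda_k=\delta_{n_k}$, repairs this without affecting the argument.
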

\begin{proof}
Assume the contrary. Then there exist disjoint open subsets $U_k$,
$k\in\N$, of $\nu\N$ such that $\prod_k C_0(U_k)\subset A$. Being
an open set of $\nu\N$, each $U_k$ contains at least one point of
$\N$. Let $n_k\in U_k$ be such a point. Each point of $\N$ is also
an open set of $\nu\N$, and $\mathbb C\cong C_0(\{n_k\})\subset
C_0(U_k)$. Therefore, $\prod_k C_0(\{n_k\})\subset A$. Take an
arbitrary sequence
$$
\{a_n\}_{n\in\N}\in\prod_kC_0(\{n_k\}).
$$

Set $\Mm=\N\setminus\cup_k\{n_k\}=\{m_1,m_2,\ldots\}$. As
$\{a_n\}_{n\in\N}\in\prod_kC_0(\{n_k\})$, so $a_n=0$ for any
$n\in\Mm$.

If $\Mm$ is finite then the sequence
$\{a_n\}_{n\in\N}\in\prod_kC_0(\{n_k\})$ is (modulo several first
terms) an arbitrary bounded sequence, which contradicts that this
sequence lies in $A$. If $\Mm$ is infinite then for each $n$ there
is a number $m>n$ such that one can find integers $k_1$ and $k_2$
such that $n_{k_1}>m$, $m_{k_2}>m$ and $|n_{k_1}-m_{k_2}|=1$. As
$a_{m_{k_2}}=0$ and $a_{n_{k_1}}$ may take an arbitrary value, so
we get a contradiction with the condition
$\lim_{n\to\infty}|a_{n+1}-a_n|=0$. Getting contradictions in both
cases, we conclude that our assumption was false.
\end{proof}

\begin{rk}
Note that there exists a (non-canonical) inclusion
$\beta\N\subset\nu\N$. Indeed, let $\{n_k\}_{k\in\N}$ be an
increasing sequence of integers such that
$$
\lim_{k\to\infty}n_{k+1}-n_k=\infty.
$$
Set $b_k=a_{n_k}$. The map
$\{a_n\}_{n\in\N}\mapsto\{b_k\}_{k\in\N}$ gives a $*$-homomorphism
from $A$ to $l^\infty$, and an easy check shows surjectivity of
this map. Therefore, the map $k\mapsto n_k$ extends to a
continuous injective map $\beta\N\to\nu\N$.

This shows that our sufficient condition from Section
\ref{sect:strongtrofimov} is not a necessary condition.
\end{rk}


\def\cprime{$'$} \def\cprime{$'$}
\providecommand{\bysame}{\leavevmode\hbox to3em{\hrulefill}\thinspace}
\providecommand{\MR}{\relax\ifhmode\unskip\space\fi MR }
\providecommand{\MRhref}[2]{%
  \href{http://www.ams.org/mathscinet-getitem?mr=#1}{#2}
}
\providecommand{\href}[2]{#2}

\end{document}